\newcommand*{\definiere}{\mathrel{\mathop:}=}
\newcommand*{\tensor}{\otimes}
\newcommand{\cyclic}{\mathop{\kern0.9ex{{+}\kern-2.10ex\raise-0.20
      ex\hbox{\Large\hbox{$\circlearrowright$}}}}\limits}
\def\N{\ifmmode{\mathbb N}\else{$\mathbb N$}\fi}
\def\Z{\ifmmode{\mathbb Z}\else{$\mathbb Z$}\fi}
\def\Q{\ifmmode{\mathbb Q}\else{$\mathbb Q$}\fi}
\def\R{\ifmmode{\mathbb R}\else{$\mathbb R$}\fi}
\def\C{\ifmmode{\mathbb C}\else{$\mathbb C$}\fi}
\def\K{\ifmmode{\mathbb K}\else{$\mathbb K$}\fi}
\def\P{\ifmmode{\mathbb P}\else{$\mathbb P$}\fi}
\def\g{\ifmmode{\mathfrak g}\else {$\mathfrak g$}\fi}
\def\h{\ifmmode{\mathfrak h}\else {$\mathfrak h$}\fi}
\def\a{\ifmmode{\mathfrak a}\else {$\mathfrak a$}\fi}
\def\k{\ifmmode{\mathfrak k}\else {$\mathfrak k$}\fi}
\def\p{\ifmmode{\mathfrak p}\else {$\mathfrak p$}\fi}
\def\b{\ifmmode{\mathfrak b}\else {$\mathfrak b$}\fi}
\def\n{\ifmmode{\mathfrak n}\else {$\mathfrak n$}\fi}
\def\m{\ifmmode{\mathfrak m}\else {$\mathfrak m$}\fi}
\def\t{\ifmmode{\mathfrak t}\else {$\mathfrak t$}\fi}
\def\O{\ifmmode{\mathscr{O}}\else {$\mathscr{O}$}\fi}
\def\W{\ifmmode{\mathcal{V}}\else {$\mathscr{W}$}\fi}
\def\hq{/\hspace{-0.14cm}/}
\def\kleinematrix#1,#2,#3,#4,{\begin{pmatrix}#1 & #2 \\ #3 & #4
  \end{pmatrix}}
\DeclareMathOperator{\tdim}{tdim}
\DeclareMathOperator{\supp}{supp}
\DeclareMathOperator{\Spec}{Spec}
\DeclareMathOperator{\Ass}{Ass}
\DeclareMathOperator{\Quot}{Quot}
\newtheoremstyle{daniel}{3.0mm}{0mm}{\itshape}{}{\bfseries}{.}{1.5mm}{}
\theoremstyle{daniel}
\newtheorem{thm}{Theorem}[section]
\newtheorem{prop}[thm]{Proposition}
\newtheorem{Defi}[thm]{Definition}
\newtheorem{lemma}[thm]{Lemma}
\newtheorem{Exs}[thm]{Examples}
\newtheorem{Ex}[thm]{Example}
\newtheorem{Rems}[thm]{Remarks}
\newtheorem{Rem}[thm]{Remark}
\newtheorem*{thm*}{Theorem}
\newtheorem*{cor*}{Corollary}
\newtheorem*{thm3.1}{Theorem 3.1}
\newtheorem*{prop*}{Proposition}
\newenvironment{rem}   {\begin{Rem}\em}{\end{Rem}}
\newenvironment{defi}  {\begin{Defi}\em}{\end{Defi}}
\newenvironment{ex}  {\begin{Ex}\em}{\end{Ex}}
\def\cprime{$'$} \def\polhk#1{\setbox0=\hbox{#1}{\ooalign{\hidewidth
  \lower1.5ex\hbox{`}\hidewidth\crcr\unhbox0}}}
  \def\polhk#1{\setbox0=\hbox{#1}{\ooalign{\hidewidth
  \lower1.5ex\hbox{`}\hidewidth\crcr\unhbox0}}}
\providecommand{\bysame}{\leavevmode\hbox to3em{\hrulefill}\thinspace}
\providecommand{\MR}{\relax\ifhmode\unskip\space\fi MR }
\providecommand{\MRhref}[2]{%
  \href{http://www.ams.org/mathscinet-getitem?mr=#1}{#2}
}
\providecommand{\href}[2]{#2}
\begin{document}
\title{1-rational singularities and quotients by reductive groups}
\author{Daniel Greb}
\thanks{\emph{Mathematical Subject Classification:} 14L30, 14L24, 14B05}
\thanks{\emph{Keywords:} group actions on algebraic varieties, good quotient, singularities}
\date{\today}
\address{Mathematisches Institut\\
Abteilung f\"ur Reine Mathematik\\
Albert-Ludwigs-Universit\"at\\
Eckerstr.~1\\
79104 Freiburg im Breisgau\\
Germany}
\email{daniel.greb@math.uni-freiburg.de}{}
\urladdr{\href{http://home.mathematik.uni-freiburg.de/dgreb}{http://home.mathematik.uni-freiburg.de/dgreb}}

\begin{abstract}
We prove that good quotients of algebraic varieties with 1-rational singularities also have 1-rational singularities. This refines a result of Boutot on rational singularites of good quotients.
\end{abstract}
\maketitle
%
%
\section{Introduction and statement of the main result}
Generalising the Hochster-Roberts theorem \cite{HochsterRoberts}, Jean-Fran\c{c}ois Boutot proved that the class of varieties with rational singularities is stable under taking good quotients by reductive groups, see \cite{Boutot}.

In this short note we study varieties with \emph{1-rational singu\-la\-ri\-ties}. This is the natural class of singular varieties to which projectivity results for K\"ahler Moishezon manifolds generalise, cf.\ \cite{ProjectivityofMoishezon}. Our main result is:
\begin{thm3.1}
Let $G$ be a complex reductive Lie group and let $X$ be an algebraic $G$-variety such that the
good quotient $\pi: X \to X\hq G$ exists. If $X$ has $1$-rational
singularities, then $X\hq G$ has $1$-rational singularities.
\end{thm3.1}
In our proof, which forms a part of the author's thesis \cite{Doktorarbeit}, we follow \cite{Boutot} and we check that in Boutot's arguments it is possible to separate the different cohomology degrees.

Theorem~\ref{rationalsingularities} has been used in \cite{PaHq} to prove projectivity of compact momentum map quotients of algebraic varieties.

\subsection*{Acknowledgements}
The author wants to thank Miles Reid for kindly answering his questions via e-mail. The author gratefully acknowledges the financial support of the Mathematical Sciences Research Institute, Berkeley, by means of a postdoctoral fellowship during the 2009 "Algebraic Geometry" program.
\section{Preliminaries}
\subsection{Singularities and resolution of singularities}
We work over the field $\C$ of complex numbers.  If $X$ is an algebraic variety we denote by $\tdim_x X$ the dimension of the
Zariski tangent space at $x \in X$. I.e., if $\mathfrak{m}_x$ denotes the maximal ideal in
$\mathscr{O}_{X,x}$, then $\tdim_x X \definiere \dim_\C
\mathfrak{m}_x / \mathfrak{m}_x^2$. A point $x$ in an algebraic variety $X$ is
called \emph{singular} if $\tdim_xX > \dim X$. We denote the singular set of an algebraic variety $X$ by $X_{sing}$. A variety $X$ is called \emph{smooth} if $X_{sing} = \emptyset$.
\begin{defi}
Let $X$ be an algebraic variety. A \emph{resolution} of $X$ is a proper birational
surjective morphism $f: Y
\rightarrow X$ from a smooth algebraic variety $Y$ to $X$.
\end{defi}
If $X$ is an algebraic variety, by a theorem of Hironaka \cite{Hironaka} there exists a resolution $f: Y \to X$ by a projective morphism $f$ such that the
restriction $f: f^{-1}(X\setminus X_{sing}) \to X \setminus
X_{sing}$ is an isomorphism. See also \cite{Bierstone}, \cite{Hauser}, and \cite{KollarResolutions} for later improvements and simplifications of Hironaka's proof.
\subsection{$1$-rational singularities}\label{1rationalsing}
In this section we introduce the class of singularities studied in this note.
\begin{defi}
An algebraic variety $X$ is said to have \emph{$1$-rational
singularities}, if the following two conditions are fulfilled:
\vspace{-3mm}
\begin{enumerate}
\item $X$ is normal,
\item for every resolution $f: \widetilde X \rightarrow X$ of $X$ we have $R^1f_*\mathscr O _{\widetilde X} = 0$.
\end{enumerate}
\end{defi}
\begin{prop}\label{rationalresolutionrationalsingularities}
Let $X$ be a normal algebraic variety. If there exists one resolution
$f_0: X_0
\rightarrow X$ of $X$ such that $R^1(f_0)_*\mathscr{O}_{X_0} = 0$, then $X$ has $1$-rational singularities.
\end{prop}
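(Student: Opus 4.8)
The plan is to show that the single vanishing hypothesis propagates to \emph{every} resolution of $X$, which together with the assumed normality is precisely what the definition of $1$-rational singularities demands. So let $f\colon \widetilde X \to X$ be an arbitrary resolution; I must prove that $R^1 f_* \mathscr{O}_{\widetilde X} = 0$. First I would reduce the problem to a comparison between $f$ and the good resolution $f_0$ by passing to a common dominating resolution. Taking the closure of the graph of the birational map $X_0 \dashrightarrow \widetilde X$ inside the fibre product $X_0 \times_X \widetilde X$ and resolving it by Hironaka's theorem produces a smooth variety $Z$ together with proper birational morphisms $p\colon Z \to X_0$ and $q\colon Z \to \widetilde X$ satisfying $f_0 \circ p = f \circ q =: g$.

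The key input, which I expect to be the main obstacle, is the vanishing $R^1 p_* \mathscr{O}_Z = 0$ for the proper birational morphism $p$ between the smooth varieties $Z$ and $X_0$; equivalently, smooth varieties have $1$-rational singularities. Together with the identities $p_* \mathscr{O}_Z = \mathscr{O}_{X_0}$ and $q_* \mathscr{O}_Z = \mathscr{O}_{\widetilde X}$, which follow from the normality of the smooth targets via Zariski's main theorem, this is the only place where genuine geometric content enters: it rests on the rationality of smooth points and cannot be obtained by purely formal manipulation of spectral sequences.

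Granting this input, I would run the Leray spectral sequence twice. For the factorisation $g = f_0 \circ p$, with $E_2^{i,j} = R^i (f_0)_* R^j p_* \mathscr{O}_Z \Rightarrow R^{i+j} g_* \mathscr{O}_Z$, the five-term exact sequence of low degrees reads
\[
0 \rightarrow R^1 (f_0)_* \bigl(p_* \mathscr{O}_Z\bigr) \rightarrow R^1 g_* \mathscr{O}_Z \rightarrow (f_0)_* R^1 p_* \mathscr{O}_Z .
\]
Substituting $p_* \mathscr{O}_Z = \mathscr{O}_{X_0}$, $R^1 p_* \mathscr{O}_Z = 0$ and the hypothesis $R^1 (f_0)_* \mathscr{O}_{X_0} = 0$ forces $R^1 g_* \mathscr{O}_Z = 0$.

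It remains to descend this vanishing from $Z$ to $\widetilde X$ along $q$. For the factorisation $g = f \circ q$ the analogous five-term sequence produces the injection $R^1 f_* \bigl(q_* \mathscr{O}_Z\bigr) \hookrightarrow R^1 g_* \mathscr{O}_Z$; using $q_* \mathscr{O}_Z = \mathscr{O}_{\widetilde X}$ this rewrites as an injection $R^1 f_* \mathscr{O}_{\widetilde X} \hookrightarrow R^1 g_* \mathscr{O}_Z$. Since the target has just been shown to vanish, $R^1 f_* \mathscr{O}_{\widetilde X} = 0$. As $f$ was an arbitrary resolution and $X$ is normal by hypothesis, $X$ has $1$-rational singularities, as required. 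I would note that the argument uses the input from the second paragraph only through $R^1 p_* \mathscr{O}_Z = 0$ and the two pushforward identities, so isolating the cohomological degree $1$ throughout poses no difficulty here.
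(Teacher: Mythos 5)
Your proposal is correct and follows essentially the same route as the paper: both pass to a common dominating resolution $Z$ via Hironaka, invoke $R^1$-vanishing and $\mathscr{O}$-pushforward identities for resolutions of smooth varieties, and then apply the five-term exact sequence of the Grothendieck--Leray spectral sequence to both factorisations of $Z \to X$. The only cosmetic difference is that the paper derives the full isomorphism $R^1(f_j)_*\mathscr{O}_{X_j} \cong R^1(f_j\circ g_j)_*\mathscr{O}_Z$ symmetrically on both sides, whereas you use the vanishing on the $f_0$ side and only the injectivity on the other side, which is an equivalent reading of the same sequence.
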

\begin{proof}
Let $f_1: X_1 \rightarrow X$ be a second resolution of $X$. By \cite{Hironaka}, there exits a smooth
algebraic variety $Z$ and resolutions $g_0: Z \rightarrow X_0$ and $g_1: Z \rightarrow X_1$ such
that the following diagram commutes
\[\begin{xymatrix}{
   &  Z  \ar[rd]^{g_1}\ar[ld]_{g_0}&    \\
X_0\ar[rd]_{f_0}&     & X_1\ar[ld]^{f_1} \\
   &  X  & .
}
  \end{xymatrix}
\]
For $j=0,1$ there exists a spectral sequence (see \cite{Weibel}) with lower terms
\[0 \rightarrow R^1 (f_j)_*((g_j)_*\mathscr{O}_Z) \rightarrow R^1(f_j\circ g_j)_*\mathscr{O}_Z
\rightarrow (f_j)_*(R^1(g_j)_*\mathscr{O}_Z) \rightarrow \cdots. \]
Since $g_0$ and $g_1$ are resolutions of smooth algebraic varieties, we have
$R^1(g_j)_*\mathscr{O}_Z =0$ and $(g_j)_*\mathscr{O}_Z = \mathscr{O}_{X_j}$ for $j =0,1$ (see
\cite{Hironaka} and \cite{Ueno}). It follows that
\[0= R^1(f_0)_*\mathscr{O}_{X_0}\cong R^1 (f_0\circ g_0)_*\mathscr{O}_Z \cong R^1(f_1 \circ
g_1)_*\mathscr{O}_Z \cong R^1(f_1)_*\mathscr{O}_{X_1}.\qedhere\]
\end{proof}
\begin{rem}
The proof shows that, if $f_1: X_1 \rightarrow X$ and $f_2:
X_2 \rightarrow X$ are two resolutions of $X$, there is an
isomorphism $R^1(f_1)_* \mathscr{O}_{X_1} \cong R^1(f_2)_*
\mathscr{O}_{X_2}$. From this, it follows that having $1$-rational singularities is a local property.
Furthermore, if $X$ is an algebraic $G$-variety for an algebraic group $G$, and $f: \widetilde {X} \to
X$ is a resolution of $X$, then the support of $R^1f_*\mathscr{O}_X$
is a $G$-invariant subvariety of $X$.
\end{rem}
\subsubsection{Rational singularities}
In this section we shortly discuss the relation of the notion "1-rational singularity" to the more commonly used notion of "rational singularity".
\begin{defi}
An algebraic variety $X$ is said to have \emph{rational
singularities}, if the following two conditions are fulfilled:
\vspace{-3mm}
\begin{enumerate}
\item $X$ is normal,
\item for every resolution $f: \widetilde X \rightarrow X$ of $X$ we have $R^jf_*\mathscr O _{\widetilde X} = 0
$ for all $j =1, \dots, \dim X$.
\end{enumerate}
\end{defi}
\begin{rem}
Again, the vanishing of the higher direct image sheaves is independent of the chosen resolution.
\end{rem}
Due to a result of Malgrange \cite[p.\ 236]{Malgrange} asserting that $R^{\dim  X}f_*\mathcal{O}_{\widetilde{X}} = 0$ for every resolution $f: \widetilde X \to X$ of an irreducible variety $X$, an algebraic surface has $1$-rational singularities if and only if it has rational singularities. These notions differ in higher dimensions as is illustrated by the following example.
\begin{ex}
Let $Z$ be a smooth quartic hypersurface in $\P_3$
and let $X$ be the affine cone over $Z$ in $\C^4$. The variety $X$ has an isolated singularity at the origin. Let $L$ be the total space of the line bundle $\mathcal{O}_Z(-1)$, i.e., the restriction of the dual of the hyperplane bundle of $\P_3$ to $Z$. Then, blowing down the zero section $\mathcal{Z}_L \subset L$ and setting $\widetilde X \definiere L$, we obtain a map
$f: \widetilde X \to X$ which is a resolution of singularities, an isomorphism outside of the origin $0 \in X$ with $f^{-1}(0) = \mathcal{Z}_L \cong Z$. We claim that $0 \in X$ is a $1$-rational singularity which is not rational.

To see that the origin is a normal point of $X$ it suffices to note that it is obtained as the blow-down of the maximal compact subvariety $\mathcal{Z}_L$ of the smooth variety $L$.

To compute $(R^jf_*\mathcal{O}_{\widetilde X})_0$, we use the Leray spectral sequence
\[\cdots \to H^j(X, \mathcal{O}_X) \to H^j(\widetilde{X}, \mathcal{O}_{\widetilde X}) \to H^0(X, R^j f_* \mathcal{O}_{\widetilde X}) \to H^{j+1}(X, \mathcal{O}_X)    \to  \cdots\]
and the fact that $X$ is affine to show that $H^j(\widetilde{X}, \mathcal{O}_{\widetilde X}) \cong H^0(X, R^jf_*\mathcal{O}_{\widetilde X}) = (R^jf_*\mathcal{O}_{\widetilde X})_0$. Expanding cohomology classes into Taylor series along fibres of $L$, we get that $H^j(\widetilde{X}, \mathcal{O}_{\widetilde X}) \cong \bigoplus_{k\geq 0}H^j(Z, \mathcal{O}_Z(k))$. Hence, we have
\begin{equation}\label{hypersurfacecohomology}
(R^jf_*\mathcal{O}_{\widetilde X})_0 \cong \bigoplus_{k\geq 0}H^j(Z, \mathcal{O}_Z(k)) \quad \quad\text{for all $j \geq 1$}.
\end{equation}
It follows from \eqref{hypersurfacecohomology} and \cite[Chap III, Ex 5.5]{Hartshorne} that $(R^1f_*\mathcal{O}_{\widetilde X})_0 = 0$, and hence that $0 \in X$ is a $1$-rational singularity. Since the canonical bundle $\mathcal{K}_Z$ of $Z$ is trivial, it follows from Serre duality that $H^2(Z, \mathcal{O}_Z(k)) \cong H^0(Z, \mathcal{O}_Z(-k))$. As a consequence, we get
\begin{equation}
H^2(Z, \mathcal{O}_Z(k)) =
\begin{cases}
\;\mathbb{C} & \text{for } k=0,\\
\;0  & \text{otherwise}.
\end{cases}
\end{equation}
Together with \eqref{hypersurfacecohomology} this implies that $(R^2f_*\mathcal{O}_{\widetilde X})_0 = \C$. Consequently, the singular point $0 \in X$ is not a rational singu\-la\-ri\-ty.
\end{ex}
\subsection{Good quotients}
\begin{defi}
Let $G$ be a complex reductive Lie group acting algebraically on an algebraic variety $X$. An algebraic variety $Y$ together with a morphism $\pi: X \to Y$ is called \emph{good quotient} of $X$ by the action of $G$, if
\begin{enumerate}
\item $\pi$ is $G$-invariant, surjective, and affine,
\item $(\pi_* \mathscr{O}_X)^G=\mathscr{O}_Y$.
\end{enumerate}
\end{defi}
\begin{ex}
Let $X$ be an affine $G$-variety. Then the algebra of invariants $\C [X]^G$ is finitely generated, and its inclusion into $\C[X]$ induces a regular map $\pi$ from $X$ to $Y \definiere \Spec (\C[X]^G)$ which fulfills $(1)$ and $(2)$ above, i.e., $\pi$ is a good quotient.
\end{ex}
\section{Singularities of good quotients: proof of the main result}
Let $G$ be a complex reductive Lie group and let $X$ be an algebraic $G$-variety such that the
good quotient $X\hq G$ exists. We study the singularities of $X\hq G$ relative
to the singularities of $X$.

More precisely, we prove the following theorem, which is the main result of this note.
\begin{thm}\label{rationalsingularities}
Let $G$ be a complex reductive Lie group and let $X$ be an algebraic $G$-variety such that the
good quotient $\pi: X \to X\hq G$ exists. If $X$ has $1$-rational
singularities, then $X\hq G$ has $1$-rational singularities.
\end{thm}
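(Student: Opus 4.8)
The plan is to follow Boutot's strategy while isolating the first cohomological degree. Since having $1$-rational singularities is a local property and the quotient morphism $\pi$ is affine, I would first reduce to the case where $Y \definiere X\hq G$ is affine; then $X = \pi^{-1}(Y)$ is affine as well. Normality of $Y$ is immediate, because the ring of invariants of the normal ring $\mathbb{C}[X]$ under the reductive group $G$ is again normal. By Proposition~\ref{rationalresolutionrationalsingularities} it then suffices to exhibit one resolution $g\colon \widetilde Y \to Y$ with $R^1 g_* \mathcal{O}_{\widetilde Y} = 0$, and since $Y$ is affine this is equivalent to the vanishing $H^1(\widetilde Y, \mathcal{O}_{\widetilde Y}) = 0$.

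The geometric heart is to build a commutative square relating resolutions of $X$ and of $Y$. Fixing a resolution $g\colon \widetilde Y \to Y$, I would form the fibre product $Z \definiere X \times_Y \widetilde Y$, equipped with the $G$-action induced from $X$ (trivial on $\widetilde Y$), let $Z_0 \subseteq Z$ be the $G$-invariant union of the irreducible components dominating $\widetilde Y$, and choose a $G$-equivariant resolution $\rho\colon \widetilde X' \to Z_0$. Composing with the two projections produces a resolution $f'\colon \widetilde X' \to X$ of $X$ together with a $G$-invariant morphism $\phi\colon \widetilde X' \to \widetilde Y$ satisfying $\pi \circ f' = g \circ \phi$.

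The cohomology computation then uses only degree one. Because $X$ has $1$-rational singularities, $R^1 f'_* \mathcal{O}_{\widetilde X'} = 0$ by Proposition~\ref{rationalresolutionrationalsingularities}; since $X$ is affine and $f'_* \mathcal{O}_{\widetilde X'} = \mathcal{O}_X$, the Leray spectral sequence for $f'$ gives $H^1(\widetilde X', \mathcal{O}_{\widetilde X'}) \cong H^0(X, R^1 f'_* \mathcal{O}_{\widetilde X'}) = 0$. The five-term Leray sequence for $\phi$ then yields an injection $H^1(\widetilde Y, \phi_* \mathcal{O}_{\widetilde X'}) \hookrightarrow H^1(\widetilde X', \mathcal{O}_{\widetilde X'}) = 0$, so that $H^1(\widetilde Y, \phi_* \mathcal{O}_{\widetilde X'}) = 0$. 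Only the $R^1$-vanishing on $X$ has entered, which is precisely the promised separation of the cohomological degrees.

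It remains to deduce $H^1(\widetilde Y, \mathcal{O}_{\widetilde Y}) = 0$ from $H^1(\widetilde Y, \phi_* \mathcal{O}_{\widetilde X'}) = 0$, and this is the step where reductivity is essential and where I expect the main obstacle. The goal is to show that $\mathcal{O}_{\widetilde Y}$ is a direct summand of $\phi_* \mathcal{O}_{\widetilde X'}$ as $\mathcal{O}_{\widetilde Y}$-modules, since then $H^1(\widetilde Y, \mathcal{O}_{\widetilde Y})$ is a direct summand of $0$. As $\phi$ is $G$-invariant, the Reynolds operator splits off the $G$-invariants, so it is enough to identify $(\phi_* \mathcal{O}_{\widetilde X'})^G = \mathcal{O}_{\widetilde Y}$. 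I would establish this by verifying that $\pr_2\colon Z \to \widetilde Y$ is itself a good quotient — good quotients being stable under base change because $\pi$ is affine and the Reynolds decomposition is preserved by the pullback $g^*$ — that $\rho_* \mathcal{O}_{\widetilde X'}$ is the normalisation of $Z_0$, and that normalisation commutes with the formation of good quotients for reductive groups. The delicate points are the non-flat base-change identity $(\pr_2)_* \mathcal{O}_Z = g^*(\pi_* \mathcal{O}_X)$ together with $(g^*(\pi_* \mathcal{O}_X))^G = \mathcal{O}_{\widetilde Y}$, and the interplay of normalisation with invariants on the possibly non-normal and possibly reducible fibre product $Z$. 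Assembling these facts is where the real work lies, whereas the cohomological skeleton above is entirely formal.
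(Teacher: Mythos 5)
Your proposal is sound in outline, but it takes a genuinely different route from the paper. The paper follows Boutot closely: after the normality step it runs an induction on $\dim X\hq G$, using Lemma~\ref{projectionformula} on general hyperplane sections to reduce to the case where $R^1p_*\mathscr{O}_Z$ is supported at a single point, and then proves $H^1(Z,\mathscr{O}_Z)=0$ by an injectivity chase through the punctured quotient; the two essential inputs there are the local-cohomology vanishing of Proposition~\ref{localvanishing} (formal duality plus Grauert--Riemenschneider) and the \v{C}ech-averaging Lemma~\ref{integratingcohomology}. You instead collapse the whole theorem onto one invariant-theoretic identity, $(\phi_*\mathcal{O}_{\widetilde X'})^G=\mathcal{O}_{\widetilde Y}$, i.e.\ the assertion that $\widetilde Y$ is the good quotient of the normalised dominant component of $X\times_Y\widetilde Y$; granted this, the Reynolds operator splits $\mathcal{O}_{\widetilde Y}$ off $\phi_*\mathcal{O}_{\widetilde X'}$ as an $\mathcal{O}_{\widetilde Y}$-module and two five-term Leray sequences give $H^1(\widetilde Y,\mathcal{O}_{\widetilde Y})=0$, which by Proposition~\ref{rationalresolutionrationalsingularities} and affineness of $Y$ suffices. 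Your route needs no induction, no hyperplane-section lemma, and no local cohomology, and it isolates degree one even more transparently than the paper does (only $R^1$ of the resolution of $X$ ever enters); what it costs is the invariant theory on the non-normal, possibly reducible fibre product, which the paper never has to confront. Note also that your Reynolds splitting is exactly the sheaf-level analogue of the paper's Lemma~\ref{integratingcohomology}.

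The step you defer is genuinely the crux, and your phrase ``normalisation commutes with the formation of good quotients'' is where care is required. Two points. First, reduce to $G$-irreducible $X$ before forming the fibre product (the paper does this right after the normality step); then your $Z_0$, the union of components dominating $\widetilde Y$, coincides with the closure of $\pi^{-1}(V)$, where $V\subseteq Y$ is the open set over which $g$ is an isomorphism, and only then is $f'\colon\widetilde X'\to X$ really a resolution of all of $X$. Second, the normalisation step has a precise danger: if $\nu\colon Z_0^\nu\to Z_0$ denotes the normalisation and $b$ is an invariant section of $\nu_*\mathcal{O}_{Z_0^\nu}$, then applying the Reynolds operator to a monic equation of $b$ over $\mathcal{O}_{Z_0}$ (the operator is linear over invariants) shows that $b$ is integral over $\mathcal{O}_{\widetilde Y}$ --- but integrality alone does not place $b$ in $\mathcal{O}_{\widetilde Y}$, because invariant \emph{rational} functions need not descend to good quotients: for the scaling action of $\mathbb{C}^*$ on $\mathbb{C}^2$ the function $x/y$ is invariant, yet the quotient is a point. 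What rescues the claim in your situation is generic triviality, not a general commutation principle: over the dense open set $g^{-1}(V)\subseteq\widetilde Y$ the map $Z_0^\nu\to\widetilde Y$ is identified with the original quotient $\pi^{-1}(V)\to V$ (and $\pi^{-1}(V)$ is already normal since $X$ is), so $b$ agrees generically with the pullback of a rational function on $\widetilde Y$; an element of $\mathbb{C}(\widetilde Y)$ integral over the normal ring $\mathcal{O}_{\widetilde Y}$ is regular. Equivalently, $Z_0^\nu\hq G\to\widetilde Y$ is finite (integrality) and birational (generic triviality) onto a normal variety, hence an isomorphism. With that argument supplied your proof closes. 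A final simplification: you do not actually need an equivariant resolution of $Z_0$, since any resolution $\rho$ factors through the normalisation, so $\rho_*\mathcal{O}_{\widetilde X'}=\nu_*\mathcal{O}_{Z_0^\nu}$ already carries the $G$-action induced by the canonical lift of the action to $Z_0^\nu$, and that is all the Reynolds argument requires.
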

Before proving the theorem we explain two technical lemmata. The first one discusses the relation between cohomology modules of $X$ and of $X\hq G$.
\begin{lemma}\label{integratingcohomology}
Let $X$ be an algebraic $G$-variety with good quotient $\pi: X \to X\hq G$. Then the
natural map $\pi^*: H^1(X\hq G, \mathscr{O}_{X /\negthickspace / G}) \to H^1(X, \mathscr{O}_X)$ is injective.
\end{lemma}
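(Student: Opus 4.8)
The plan is to exploit the defining property of a good quotient, namely that $\pi$ is affine and $(\pi_*\mathscr{O}_X)^G = \mathscr{O}_{X\hq G}$, in order to realise $H^1(X\hq G, \mathscr{O}_{X\hq G})$ as a $G$-invariant summand of $H^1(X, \mathscr{O}_X)$. Since $\pi$ is affine, the Leray spectral sequence for $\pi$ degenerates to give $H^1(X, \mathscr{O}_X) \cong H^1(X\hq G, \pi_*\mathscr{O}_X)$, because the higher direct images $R^q\pi_*\mathscr{O}_X$ vanish for $q \geq 1$. This is the key structural reduction: it transports the cohomology of $X$ to cohomology on the quotient with coefficients in the sheaf $\pi_*\mathscr{O}_X$.

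With that in hand, I would bring in reductivity of $G$. The sheaf $\pi_*\mathscr{O}_X$ carries an algebraic $G$-action, and by Reynolds-operator type arguments (linear reductivity of $G$) it decomposes $G$-equivariantly as $\pi_*\mathscr{O}_X = (\pi_*\mathscr{O}_X)^G \oplus \mathscr{F}$, where $\mathscr{F}$ is the sum of the non-trivial isotypic components and $(\pi_*\mathscr{O}_X)^G = \mathscr{O}_{X\hq G}$ by the quotient axiom. This is a direct-sum decomposition of sheaves of $\mathscr{O}_{X\hq G}$-modules, so cohomology respects it:
\[
H^1(X\hq G, \pi_*\mathscr{O}_X) \cong H^1(X\hq G, \mathscr{O}_{X\hq G}) \oplus H^1(X\hq G, \mathscr{F}).
\]
Under the isomorphism $H^1(X, \mathscr{O}_X) \cong H^1(X\hq G, \pi_*\mathscr{O}_X)$, the pullback map $\pi^*$ corresponds precisely to the inclusion of the first summand $H^1(X\hq G, \mathscr{O}_{X\hq G})$, which is manifestly injective as a direct-summand inclusion. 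This would complete the argument.

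The main obstacle I anticipate is justifying the sheaf-level equivariant splitting rigorously and checking its compatibility with the Leray isomorphism, rather than the formal cohomology manipulation. One must verify that the Reynolds operator globalises to an $\mathscr{O}_{X\hq G}$-linear projection of sheaves onto the invariant part and that this projection commutes with the edge map of the Leray spectral sequence, so that $\pi^*$ is genuinely identified with the summand inclusion. Care is also needed because $X$ need not be affine (only the morphism $\pi$ is affine), so the decomposition of $\pi_*\mathscr{O}_X$ must be carried out functorially on the quasi-coherent sheaf rather than on a global ring of functions; verifying that the isotypic decomposition is compatible with restriction to affine opens of $X\hq G$ is the delicate point. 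Once these compatibilities are established, injectivity of $\pi^*$ follows immediately.
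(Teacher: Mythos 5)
Your proposal is correct, but it takes a genuinely different route from the paper. The paper proves the lemma by a direct \v{C}ech computation: it picks an affine open cover $\mathscr{U}$ of $X\hq G$, observes that $\pi^{-1}(\mathscr{U})$ is again an affine cover because $\pi$ is affine, takes a $1$-cocycle $\eta$ with $\pi^*[\eta]=0$, writes $\pi^*\eta$ as the coboundary of a $0$-cochain $(\nu_i)$, averages each $\nu_i$ over a maximal compact subgroup $K\subset G$ to produce invariant functions (this does not disturb the coboundary relation, since the functions $\pi^*\eta_{ij}$ are themselves $G$-invariant), and then descends the invariant cochain to $X\hq G$ using $(\pi_*\mathscr{O}_X)^G=\mathscr{O}_{X /\negthickspace / G}$, exhibiting $\eta$ as a coboundary. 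So the paper applies the averaging operator to one specific cochain, whereas you sheafify it: you split the unit map $\mathscr{O}_{X /\negthickspace / G}\to\pi_*\mathscr{O}_X$ by the Reynolds projection onto the invariant part and identify $\pi^*$ with that inclusion via the degenerate Leray spectral sequence. The compatibilities you flag as delicate are real but standard: for affine $U$ the Reynolds operator on $\mathscr{O}_X(\pi^{-1}(U))$ is linear over the invariants and commutes with the $G$-equivariant restriction maps (equivariant maps preserve isotypic decompositions), so it glues over the basis of affine opens to an $\mathscr{O}_{X /\negthickspace / G}$-linear retraction of sheaves; and the identification of $\pi^*$ with the map induced by the adjunction unit followed by the Leray edge isomorphism is the usual factorization of pullback, valid for any morphism and quasi-coherent coefficients. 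In exchange for invoking this machinery, your argument yields more than the paper's: $\pi^*$ is a \emph{split} injection on $H^p$ for every $p\geq 0$, not just $p=1$, which is the direct-summand statement in the spirit of Hochster--Roberts and Boutot; the paper's \v{C}ech argument, by contrast, is elementary and self-contained (no spectral sequences, no sheaf-level splitting, only averaging of finitely many regular functions), which is all that is needed since only the degree-one statement enters the proof of the main theorem.
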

\begin{proof}Let $\mathscr{U} = \{U_i\}_{i\in I}$ be an affine open covering of $X\hq G$. We
can compute the cohomology module $H^1(X\hq G, \mathscr{O}_{X /\negthickspace/ G})$
via \v{C}ech cohomology with
respect to the covering $\mathscr{U}$. Since $\pi$ is an affine map, $\pi^{-1}(\mathscr{U})
\definiere \{\pi^{-1}(U_i)\}_{i \in I}$ is an affine open covering of $X$ and we can compute the
cohomology module $H^1(X, \mathscr{O}_X)$ via \v{C}ech cohomology with respect to the covering
$\pi^{-1}(\mathscr{U})$.

Let $\eta = (\eta_{ij}) \in C^1(\mathscr{U}, \mathscr{O}_{X /\negthickspace / G})$ be a \v{C}ech cocycle such that
the pullback of the associated cohomology class $[\eta] \in H^1(X\hq G, \mathscr{O}_{X /\negthickspace / G})$
fulfills $\pi^* ([\eta]) = 0 \in H^1(X, \mathscr{O}_X)$. Then, there exists a cocycle $\nu =
(\nu_i) \in C^0(\pi^{-1}(\mathscr{U}), \mathscr{O}_X)$ such that
\[\pi^* (\eta_{ij}) = \nu_i|_{\pi^{-1}(U_{ij})} - \nu_j|_{\pi^{-1}(U_{ij})}  \in
\mathscr{O}_X(\pi^{-1}(U_{ij})).\] Averaging $\nu_i \in \mathscr{O}_X(\pi^{-1}(U_i))$ over a
maximal compact subgroup $K$ of $G$ we obtain invariant functions $\widetilde \nu_i \in
\mathscr{O}_X(\pi^{-1}(U_i))^G$. Since $\pi^*(\eta_{ij}) \in \mathscr{O}_X(\pi^{-1}(U_{ij}))^G$, the cocycle $\widetilde{\nu} = (\widetilde{\nu}_i) \in C^0(\pi^{-1}(\mathscr{U}), \mathscr{O}_X)$ fulfills
\[\pi^*(\eta_{ij}) = \widetilde \nu_i|_{\pi^{-1}(U_{ij})} - \widetilde \nu_j|_{\pi^{-1}(U_{ij})}  \in
\mathscr{O}_X(\pi^{-1}(U_{ij})).\]
For all $i$, there exist a uniquely determined function $\widehat \nu_i \in \mathscr{O}_{X /\negthickspace / G}(U_i)$
with $\pi^*(\widehat \nu_i) = \widetilde{\nu}_i$. Consequently, we have
\[\eta_{ij} = \widehat \nu_i|_{U_{ij}} - \widehat \nu_j|_{U_{ij}} \in \mathscr{O}_{X /\negthickspace / G}(U_{ij}).\]
Therefore, $[\eta] = 0 \in H^1(X\hq G, \mathscr{O}_{X /\negthickspace / G})$ and $\pi^*$ is injective, as claimed.
\end{proof}

The second lemma will be used to obtain information about the singularities of an algebraic variety $X$ from information about the singularities of a general hyperplane section $H$ of $X$ and vice versa.

\begin{lemma}\label{projectionformula}
Let $X$ be a normal affine variety with $\dim X \geq 2$ and let $f: \widetilde{X} \to X$ be a resolution of singularities. Let $\mathscr{L} \subset \mathscr{O}_X(X)$ be a finite-dimensional subspace, such that the associated linear system is base-point free, and such that the image of $X$ under the associated map $\varphi_{\mathscr{L}}: X \to \P_N$ fulfills $\dim\bigl(\varphi_{\mathscr{L}}(X) \bigr) \geq 2$. If $h \in \mathscr{L}$ is a general element, then the following holds for the corresponding hyperplane section $H \subset X$:
\vspace{-2.5mm}
\begin{enumerate}
\item The preimage $\widetilde {H} \definiere f^{-1}(H)$ is smooth and $f|_{\widetilde H}: \widetilde{H} \to H$ is a resolution of $H$.
\item We have $R^jf_*\mathscr{O}_{\widetilde{H}} \cong R^jf_*\mathscr{O}_{\widetilde X} \tensor \mathscr{O}_{\overset{}{H}}\,$ for $j = 0,1$.
\end{enumerate}
\end{lemma}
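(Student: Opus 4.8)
The plan is to prove the two assertions separately: part (1) is a Bertini-type statement on the smooth variety $\widetilde X$, while part (2) comes from the Koszul short exact sequence cut out by $f^*h$, combined with a genericity argument controlling multiplication by $h$ on the higher direct images.

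For (1), I would first pull the linear system back to the resolution. Since $\mathscr{L}$ is base-point free and $f$ is surjective, the subspace $f^*\mathscr{L} \subset \mathscr{O}_{\widetilde X}(\widetilde X)$ is base-point free, and its associated morphism is $\varphi_{\mathscr{L}} \circ f$, whose image equals $\varphi_{\mathscr{L}}(X)$ and hence has dimension $\geq 2$. As $\widetilde X$ is smooth, Bertini's theorem (in characteristic zero) shows that the general member $\widetilde H = f^{-1}(H) = \{f^*h = 0\}$ is smooth, and the Bertini irreducibility theorem --- which is precisely where the hypothesis $\dim \varphi_{\mathscr{L}}(X) \geq 2$ is used --- shows that $H$ and $\widetilde H$ are irreducible. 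The morphism $f|_{\widetilde H}$ is proper (a restriction of the proper $f$) and surjective onto $H$ (as $\widetilde H = f^{-1}(H)$); it remains to check birationality. For this I would use that $X$ normal forces $\operatorname{codim}_X X_{sing} \geq 2$, so the irreducible hypersurface $H$ of dimension $\dim X - 1$ cannot be contained in $X_{sing}$; thus $f$ restricts to an isomorphism over the dense open set $H \setminus X_{sing}$, and $f|_{\widetilde H}$ is a resolution of $H$.

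For (2), set $s \definiere f^*h \in \mathscr{O}_{\widetilde X}(\widetilde X)$. Because $\widetilde X$ is smooth and irreducible and $s \neq 0$, multiplication by $s$ is injective, giving the short exact sequence
\[0 \to \mathscr{O}_{\widetilde X} \xrightarrow{\,\cdot s\,} \mathscr{O}_{\widetilde X} \to \mathscr{O}_{\widetilde H} \to 0.\]
Applying $Rf_*$ and writing $\mathscr{F}_j \definiere R^jf_*\mathscr{O}_{\widetilde X}$, the projection formula identifies $R^jf_*(\,\cdot s\,)$ with multiplication by $h$ on the coherent $\mathscr{O}_X$-module $\mathscr{F}_j$. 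The resulting long exact sequence then breaks into the short exact sequences
\[0 \to \mathscr{F}_j / h\mathscr{F}_j \to R^jf_*\mathscr{O}_{\widetilde H} \to \ker\bigl(h : \mathscr{F}_{j+1} \to \mathscr{F}_{j+1}\bigr) \to 0.\]
Since $\mathscr{O}_H = \mathscr{O}_X/(h)$, the left-hand term is $\mathscr{F}_j \tensor \mathscr{O}_H$, so for $j = 0,1$ the claim reduces to showing that $h$ acts injectively on $\mathscr{F}_1$ and on $\mathscr{F}_2$.

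This injectivity is the main obstacle, and it is exactly what fixes the meaning of ``general $h$''. As $f$ is proper, $\mathscr{F}_1$ and $\mathscr{F}_2$ are coherent, so $\mathscr{F}_1 \oplus \mathscr{F}_2$ has only finitely many associated subvarieties $W_1,\dots,W_m$, each irreducible. Multiplication by $h$ is a non-zero-divisor on these sheaves precisely when $h$ vanishes on none of the $W_r$, that is, when $h$ lies outside each linear subspace $\{h \in \mathscr{L} : h|_{W_r} \equiv 0\}$. Base-point freeness of $\mathscr{L}$ guarantees that for each $r$ some element of $\mathscr{L}$ is non-zero at a point of $W_r$, so each of these subspaces is proper; a general $h$ therefore avoids all of them at once. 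Imposing these finitely many conditions together with the Bertini openness conditions from part (1), I would conclude that both assertions hold for $h$ in a common dense open subset of $\mathscr{L}$, which is the only point genuinely requiring bookkeeping.
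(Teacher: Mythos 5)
Your proposal is correct and follows essentially the same route as the paper: both prove (1) via Bertini and prove (2) by pushing forward the ideal-sheaf sequence of $\widetilde H \subset \widetilde X$, reducing the claim to injectivity of multiplication by $h$ on $R^1f_*\mathscr{O}_{\widetilde X}$ and $R^2f_*\mathscr{O}_{\widetilde X}$, and securing that injectivity for general $h$ via the associated-primes/base-point-freeness argument (your variant, which trivialises $\mathscr{O}_{\widetilde X}(-\widetilde H)$ by $f^*h$ from the outset, is if anything cleaner than the paper's passage through the projection formula and right-exactness of tensor, and it avoids any apparent circularity since your modules $\mathscr{F}_1, \mathscr{F}_2$ do not depend on $h$). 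The one point to repair is in (1): the paper's definition of resolution does not require $f$ to be an isomorphism over $X \setminus X_{sing}$, so rather than citing $\mathrm{codim}_X X_{sing} \geq 2$ you should argue that, $X$ being normal, Zariski's main theorem forces the locus where the proper birational morphism $f$ fails to be an isomorphism to have codimension at least $2$ in $X$ (or, even more simply, that a general irreducible $H$ cannot be contained in any fixed proper closed subset), after which your birationality argument for $f|_{\widetilde H}$ goes through verbatim.
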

\begin{proof}
1) This follows from Bertini's theorem (see \cite[Chap III, Cor 10.9]{Hartshorne}).

2) In the exact sequence
\begin{equation}\label{idealsequence}
0 \to \mathscr{O}_{\widetilde X}(-\widetilde H) \overset{m}{\to} \mathscr{O}_{\widetilde X} \to \mathscr{O}_{\widetilde H} \to 0,
\end{equation}
the map $m$ is given by multiplication with the equation $h\in \mathcal{O}_X(X)$ defining $H$ and $\widetilde{H}$. Pushing forward the short exact sequence \eqref{idealsequence} by $f_*$ yields the long exact seqence
\begin{align}\label{longsheafsequence}
\begin{split}
0 &\to f_* \mathscr{O}_{\widetilde X}(-\widetilde H) \overset{m_0}{\to} f_*\mathscr{O}_{\widetilde X} \to f_*\mathscr{O}_{\widetilde H} \to \\
  &\to R^1f_* \mathscr{O}_{\widetilde X}(-\widetilde H) \overset{m_1}{\to} R^1 f_*\mathscr{O}_{\widetilde X} \to R^1 f_*\mathscr{O}_{\widetilde H} \to \\
  &\to R^2f_* \mathscr{O}_{\widetilde X}(-\widetilde H) \overset{m_2}{\to} R^2 f_*\mathscr{O}_{\widetilde X} \to R^2 f_*\mathscr{O}_{\widetilde H} \to \\
  &\to \cdots.
\end{split}
\end{align}
Since $X$ is an affine variety, the exact sequence above is completely determined by the following sequence of finite $\mathscr{O}_X(X)$-modules:
\begin{align}\label{longsequence}
\begin{split}
0 &\to \Gamma(\widetilde X, \mathscr{O}_{\widetilde X}(-\widetilde H)) \overset{m_0}{\to} \Gamma(\widetilde X, \mathscr{O}_{\widetilde X}) \to \Gamma(\widetilde X,\mathscr{O}_{\widetilde H}) \to \\
 &\to H^1(\widetilde X, \mathscr{O}_{\widetilde X}(-\widetilde H) ) \overset{m_1}{\to} H^1(\widetilde X,\mathscr{O}_{\widetilde X} )  \to H^1(\widetilde X,\mathscr{O}_{\widetilde H} )  \to \\
 &\to H^2(\widetilde X,\mathscr{O}_{\widetilde X}(-\widetilde H) ) \overset{m_2}{\to}H^2(\widetilde X,\mathscr{O}_{\widetilde X} ) \to H^2(\widetilde X, \mathscr{O}_{\widetilde H}) \to \\
 & \to \cdots.
\end{split}
\end{align}
The maps $m_j$, $j=0,1,2$ are given by multiplication with the element $h \in \mathscr{O}_X(X)$. We claim that we can choose $h \in \mathscr{L}$ in such a way that $m_1$ and $m_2$ are injective. Indeed, if $R$ is a commutative Noetherian ring with unity, $M$ is a finite $R$-module, and $Z_R(M)$ denotes the set of zerodivisors for $M$ in $R$, we have
\[Z_R(M) =  \bigcup_{P \in \Ass M} P,\]
where $\Ass M$ is the finite set of assassins (or associated primes) of $M$ . Hence, the set of zerodivisors for $H^1(\widetilde X, \mathscr{O}_{\widetilde X}(-\widetilde H) )$ and $H^2(\widetilde X,\mathscr{O}_{\widetilde X}(-\widetilde H) )$ is a union of finitely many prime ideals $P$ of $\mathscr{O}_X(X)$. Since the linear system associated to $\mathscr{L}$ is base-point free, the general element $h$ of $\mathscr{L}$ lies in $\mathscr{L}\setminus \bigcup P$. For such a general $h \in \mathscr{L} \setminus \bigcup P$, the maps $m_1$ and $m_2$ in the sequences \eqref{longsequence} and \eqref{longsheafsequence} are injective.

Since $\mathscr{O}_{\widetilde X}(- \widetilde{H}) \cong f^* (\mathscr{O}^{}_X(-H))$, the projection formula for locally free sheaves (see \cite[Chap III, Ex 8.3]{Hartshorne}) yields
\begin{align*}
R^jf_*\mathscr{O}_{\widetilde X}(- \widetilde H) \cong R^jf_*\mathscr{O}_{\widetilde X} \tensor \mathscr{O}_{X}^{}(-H).
\end{align*}
Furthermore, for $j=0,1$ the image of $m_j$ coincides with the image $\mathscr{B}_j$ of the natural map $R^jf_*\mathscr{O}_{\widetilde X} \otimes \mathscr{O}_{X}^{}(-H) \to R^jf_*\mathscr{O}_{\widetilde X}$.  Since $m_1$ and $m_2$ are injective by the choice of $H$, it follows that
\[R^jf_*\mathscr{O}_{\widetilde H} \cong R^jf_*\mathscr{O}_{\widetilde X} / \mathscr{B}_j \quad \text{for } j=0,1.\]
Tensoring with $R^jf_*\mathscr{O}_{\widetilde X}$ is right-exact, and hence the exact sequence
\[0 \to \mathscr{O}_X(-H) \to \mathscr{O}_X \to \mathscr{O}_H \to 0\]
yields $R^j f_* \mathscr{O}_{\widetilde H} \cong R^1f_*\mathscr{O}_{\widetilde X} \tensor \mathscr{O}^{}_H$, as claimed.
\end{proof}
\begin{proof}[Proof of Theorem \ref{rationalsingularities}]
Since the claim is local and $\pi$ is an affine map, we may assume that $X\hq G$ and $X$ are
affine.

First, we prove that normality of $X$ implies normality of $X\hq G$. We have to show that $\C[X\hq
G]\cong \C[X
]^G$ is a normal ring. So let $\alpha \in \Quot(\C[X]^G) \subset \C(X)^G$ be an element of the
quotient field of $\C[X]^G$ and assume that $\alpha$ fulfills a monic equation
\[\alpha^n + c_1\alpha^{n-1} + \dots + c_n = 0 \]with coefficients
$c_j \in \C[X]^G \subset \C[X]$. Since $\C[X]$
is normal by assumption, it follows that $\alpha \in \C(X)^G \cap \C[X] = \C[X]^G$. Hence,
$\C[X]^G$ is normal. As a consequence, we can assume in the following that $X$ is $G$-irreducible.

We prove the claim by induction on $\dim X\hq G$. For $\dim X\hq G = 0$ there is nothing to show.
For $\dim X\hq G = 1$ we notice that $X\hq G$ is smooth. So, let $\dim X\hq G \geq 2$. Let
$\pi: X \to X\hq G$ denote the quotient map and let $p_X: \widetilde X \to X$ be a resolution of $X$.
First, we prove that a general hyperplane section $H \subset X\hq G$ has $1$-rational
singularities. If $H$ is a general hyperplane section in $X\hq G$, Lemma \ref{projectionformula} applied to $\pi^{-1}(H)$ yields that $p_X|_{\widetilde H}: \widetilde H \to \pi^{-1}(H)$ is a resolution, where $\widetilde H = p_X^{-1}(\pi^{-1}(H))$, and that
\[R^j(p_X)_*\mathscr{O}_{\widetilde{X}} \otimes_{\mathscr{O}_X}\mathscr{O}_{\pi^{-1}(H)} =
R^j (p_X)_*\mathscr{O}_{\widetilde{H}} \quad \text{ for } j=0,1. \] Since $f_* \mathscr{O}_{\widetilde{X}}=\mathscr{O}_X$ by
Zariski's main theorem, it follows from the case $j=0$ that $\pi^{-1}(H)$ is normal. Alternatively, one could invoke Seidenberg's Theorem (see e.g.\ \cite[Thm.\ 1.7.1]{SommeseAdjunction}). Together with the case $j=1$, this implies that
$\pi^{-1}(H)$ has $1$-rational singularities. By
induction, it follows that $H = \pi^{-1}(H)\hq G$ has $1$-rational singularities.

Let $p: Z \to X\hq G$ be a resolution of $X\hq G$. As we have seen above, a general hyperplane
section $H$ of $X\hq G$ has $1$-rational singularities and the restriction of $p$ to $\widehat{H} \definiere
p^{-1 }(H)$, $p|_{p^{-1}(H)}: \widehat H \to H$ is a resolution of $H$. It follows that $\mathscr{O}_H
\otimes R^1p_*\mathscr{O}_Z = R^1p_*\mathscr{O}_{\widehat{H}}=0$. Consequently, the support of
$R^1p_*\mathscr{O}_Z$ does not intersect $H$ and hence, $\supp(R^1p_*\mathscr{O}_Z)$ consists of isolated points. Since the
claim is local, we can assume in the following that $R^1p_*\mathscr{O}_Z$ is supported at a single
point $x_0 \in X\hq G$.

The group $G$ acts on the fibre product $Z\times_{X /\negthickspace / G} X$ such that the map $p_X: Z \times_{X /\negthickspace/ G}
X \to X$ is equivariant. One of the $G$-irreducible components $\widetilde{X}$ of $Z\times_{X /\negthickspace/ G} X$
is birational to $X$, and, by passing to a resolution of $\widetilde{X}$ if necessary, we can assume that
$p_X: \widetilde{X} \to X$ is a resolution of $X$. We obtain the following commutative diagram
\[\begin{xymatrix}{
X \ar[d]_\pi & \ar[l]_{p_X} \ar[d]^{p_Z}\widetilde{X} \\
X\hq G & \ar[l]_<<<<<{p} Z .
}
  \end{xymatrix}
\]
Since $R^1p_*\mathscr{O}_Z$ is supported only at $x_0$, we have $(R^1p_*\mathscr{O}_Z)_{x_0} = H^0(X\hq G, R^1p_*\mathscr{O}_Z)$. Recall that $X\hq G$ is affine, hence, the Leray spectral sequence
\[0 \to H^1(X\hq G, \mathscr{O}_{X /\negthickspace / G}) \to H^1(Z, \mathscr{O}_Z) \to H^0(X\hq G,R^1p_*\mathscr{O}_Z ) \to H^2(X\hq G, \mathscr{O}_{X /\negthickspace / G})\to \cdots\]
implies that it suffices to show that $H^1(Z, \mathscr{O}_Z) = 0$.

Since $X$ is affine and has $1$-rational singularities, it follows
from the Leray spectral sequence
\[0 \to H^1(X,\mathscr{O}_X) \to H^1(\widetilde{X},
\mathscr{O}_{\widetilde{X}}) \to H^0(X, R^1f_*\mathscr{O}_{\widetilde{X}})\to \cdots\]
that $H^1(\widetilde{X}, \mathscr{O}_{\widetilde{X}}) \cong H^1(X, \mathscr{O}_X) = 0$. Consequently, it
suffices to show that there exists an injective map $H^1(Z, \mathscr{O}_Z) \hookrightarrow H^1
(\widetilde{X}, \mathscr{O}_{\widetilde{X}})$.

We introduce the following notation: $ U= (X\hq G) \setminus \{x_0\}$, $U' =\pi^{-1}(U) \subset X$, $
\widetilde{U}= p_X^{-1}(U') \subset \widetilde{X}$, $V = p^{-1}(U) \subset Z$.
We obtain the following commutative diagram of canonical maps
\begin{equation}\label{diagramrational}
\begin{xymatrix}{
 H^1(\widetilde{X}, \mathscr{O}_{\widetilde{X}}) \ar[r]^{h_{\widetilde X , \widetilde U}}& H^1 (\widetilde{U},
\mathscr{O}_{\widetilde{U}}) & \ar[l]_{h_{U',\widetilde U}} H^1(U', \mathscr{O}_{U'}) \\
H^1(Z, \mathscr{O}_Z) \ar[u]_{h_{Z,\widetilde{X}}}\ar[r]^{h_{Z,V}}& H^1(V, \mathscr{O}_V) \ar[u]&
H^1(U, \mathscr{O}_U)\ar[l]_{h_{U,V}}\ar[u]_{h_{U,U'}}.
}
  \end{xymatrix}
\end{equation}
Let $Y \definiere p^{-1}(x_0) \subset Z$. If $H^1_Y(Z, \mathscr{O}_Z)$
denotes the local cohomology groups with support in $Y$, there exists an exact sequence
\begin{equation}\label{localcohomologysequence}\dots \to H^1_Y(Z,\mathscr{O}_Z) \to H^1 (Z,
\mathscr{O}_Z) \overset{h_{Z,V}}{\longrightarrow} H^1(V, \mathscr{O}_V) \to \cdots,
\end{equation}
see \cite[Chap III, Ex 2.3]{Hartshorne}.
We use the following vanishing result which is originally due to Hartshorne and Ogus \cite{HartshorneOgus}.
\begin{prop}\label{localvanishing}
Let $W$ be a normal affine algebraic variety and $p: Z \to W$ a resolution. Let $w \in W$ and $Y = p^{-1}(w)$. Then $H^i_Y(Z, \mathscr{O}_Z) = 0$ for all $i < \dim W$.
\end{prop}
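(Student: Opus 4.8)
The plan is to pass to the dual side via formal duality, where the Grauert--Riemenschneider vanishing theorem becomes available. Write $n = \dim W$; since $p$ is a resolution it is birational and proper, so $\dim Z = n$ and $Z$ is a smooth $n$-dimensional variety with canonical sheaf $\omega_Z$. The whole statement is local at $w$, and $Y = p^{-1}(w)$ is proper over $\mathbb{C}$, so I may freely pass to the formal completion $\widehat{Z}$ of $Z$ along $Y$ (equivalently, work over the complete local ring $\widehat{\mathscr{O}}_{W,w}$ with maximal ideal $\mathfrak{m}_w$).

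First I would invoke the formal duality theorem, which for the smooth variety $Z$ and the closed fibre $Y$ provides a perfect pairing identifying the local cohomology module $H^i_Y(Z, \mathscr{O}_Z)$ with the (Matlis) dual of the coherent cohomology $H^{n-i}(\widehat{Z}, \widehat{\omega}_Z)$ of the canonical sheaf on the formal completion. In particular, $H^i_Y(Z, \mathscr{O}_Z)$ vanishes if and only if $H^{n-i}(\widehat{Z}, \widehat{\omega}_Z)$ does, so it suffices to establish the vanishing of the latter for $i < n$.

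Next I would compute $H^{n-i}(\widehat{Z}, \widehat{\omega}_Z)$. Since $p$ is proper and $W$ is affine, the theorem on formal functions (see \cite[Chap.\ III, Thm.\ 11.1]{Hartshorne}) identifies this formal cohomology group with the $\mathfrak{m}_w$-adic completion of the stalk $(R^{n-i}p_*\omega_Z)_w$. At this point the Grauert--Riemenschneider vanishing theorem enters: because $p\colon Z \to W$ is a resolution of singularities with $Z$ smooth, $R^jp_*\omega_Z = 0$ for all $j > 0$. Hence $R^{n-i}p_*\omega_Z = 0$ whenever $n - i > 0$, that is, for every $i < n = \dim W$, and consequently its completion $H^{n-i}(\widehat{Z}, \widehat{\omega}_Z)$ vanishes in this range. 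Feeding this back through the duality of the previous step yields $H^i_Y(Z, \mathscr{O}_Z) = 0$ for all $i < \dim W$, as required.

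The main obstacle, and the step requiring the most care, is the precise formulation of the formal duality pairing in this relative setting: $Z$ is only proper over the affine base $W$, not over $\mathbb{C}$, so the duality must be set up as a statement about the formal completion along the fibre $Y$, and one must verify the Mittag--Leffler and finiteness conditions that make the theorem on formal functions applicable and the pairing perfect. Everything else is formal: the reduction to a local statement at $w$, the birational invariance of the dimension, and—crucially—the unconditional nature of Grauert--Riemenschneider for an \emph{arbitrary} resolution, which is exactly what lets the argument produce vanishing all the way up to $\dim W$ without any rationality hypothesis on $W$ (and, in particular, explains why it is $\omega_Z$ rather than $\mathscr{O}_Z$ that one must push forward).
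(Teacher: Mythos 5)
Your proposal is correct and follows essentially the same route as the paper: formal duality identifies $H^i_Y(Z,\mathscr{O}_Z)$ with the (Matlis) dual of a completed higher direct image of the canonical sheaf, which then vanishes by Grauert--Riemenschneider. The only point of divergence is exactly the obstacle you flag: rather than setting up formal duality in the relative setting (where $Z$ is proper only over the affine $W$), the paper first compactifies to a projective resolution $\overline{p}\colon \overline{Z}\to\overline{W}$ with $\overline{p}^{-1}(W)=Z$ and uses excision, $H^j_Y(\overline{Z},\mathscr{O}_{\overline{Z}})\cong H^j_Y(Z,\mathscr{O}_Z)$, so that the projective formal duality theorem of Hartshorne's \emph{Ample subvarieties} applies directly and the Mittag--Leffler/finiteness issues you mention never arise.
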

\begin{proof}[Sketch of proof]
Set $n \definiere \dim W$. As a first step, we compactifiy $p$. There exist projective completions $\overline{Z}$ and $\overline{W}$ of $Z$ and $W$, respectively, and a resolution $\overline{p}: \overline{Z} \to \overline{W}$ such that $\overline{p}^{-1}(W) = Z$ and $\overline{p}|_{Z} = p$. Since $Z$ is an open neighourhood of $Y$ in $\overline{Z}$, the Excision Theorem of Local Cohomology (see \cite[Chap III, Ex 2.3]{Hartshorne}) implies that $H_Y^j(\overline{Z},\mathscr{O}_{\overline{Z}}) = H_Y^j(Z,\mathscr{O}_Z)$. Let $\mathscr{K}_{\overline{Z}}$ be the locally free sheaf associated to the canonical bundle of $\overline{Z}$. The Formal Duality Theorem (see \cite{HartshorneAmple}) implies that the dual $H_Y^j(\overline{Z}, \mathscr{O}_{\overline{Z}})^*$ of $H_Y^j(\overline{Z}, \mathscr{O}_{\overline{Z}})$ is isomorphic to $(R^{n-j}\overline{p}_*\mathscr{K}_{\overline{Z}})_w^{\widehat\ }$, where $\ ^{\widehat\ }$ denotes completion with respect to the maximal ideal of $\mathscr{O}_{\overline{W}, w} = \mathscr{O}_{W,w}$. In summary, we have obtained an isomorphism
\[H_Y^j(Z, \mathscr{O}_Z)^* \cong (R^{n-j}\overline{p}_*\mathscr{K}_{\overline{Z}})_w^{\widehat\  }\quad \quad \text{for all } j = 0, \dots, n. \]
By Grauert-Riemenschneider vanishing (see e.g.\ \cite[Chap 4.3.B]{LazarsfeldPositivity1}), the term on the right hand side equals zero for $n-j \geq 1$. This proves the claim.
\end{proof}
Since $\dim X\hq G \geq 2$, Proposition \ref{localvanishing} yields $H^1_Y(Z, \mathscr{O}_Z) = 0$. As a consequence of
\eqref{localcohomologysequence}, the map $h_{Z,V}$ is injective.

The restriction of $p$ to $V = p^{-1}(U)$ is a resolution of $U$. Since the support of
$R^1p_*\mathscr{O}_Z$ is concentrated at $x_0$, the variety $U$ has $1$-rational singularities, and the Leray
spectral sequence
\[0 \to H^1(U, \mathscr{O}_U) \overset{h_{U,V}}{\longrightarrow} H^1(V, \mathscr{O}_V) \to H^0(U,
R^1p_*\mathscr{O}_U) \to \cdots \]
yields that $h_{U,V}$ is bijective. Similar arguments show that $h_{U',\widetilde U}$ is
bijective. Furthermore, Lemma \ref{integratingcohomology} implies that $h_{U, U'}$ is injective.

By the considerations above, the map
\[h_{Z,\widetilde{U}} \definiere h_{U',\widetilde{U}}^{} \circ h_{U,U'}^{} \circ h^{-1}_{U,V} \circ h_{Z,V}^{}\]
is injective. Diagram \eqref{diagramrational} implies $h_{Z, \widetilde{U}} = h_{\widetilde{X},
\widetilde{U}} \circ h_{Z, \widetilde{X}}$, and therefore $h_{Z,\widetilde{X}}$ is injective. Consequently, we
have $H^1(Z, \mathscr{O}_Z) = 0$. This concludes the proof of Theorem \ref{rationalsingularities}.
\end{proof}
\def\cprime{$'$} \def\polhk#1{\setbox0=\hbox{#1}{\ooalign{\hidewidth
  \lower1.5ex\hbox{`}\hidewidth\crcr\unhbox0}}}
  \def\polhk#1{\setbox0=\hbox{#1}{\ooalign{\hidewidth
  \lower1.5ex\hbox{`}\hidewidth\crcr\unhbox0}}}
\providecommand{\bysame}{\leavevmode\hbox to3em{\hrulefill}\thinspace}
\providecommand{\MR}{\relax\ifhmode\unskip\space\fi MR }
\providecommand{\MRhref}[2]{%
  \href{http://www.ams.org/mathscinet-getitem?mr=#1}{#2}
}
\providecommand{\href}[2]{#2}

\end{document}